\newtheorem{thm}{Theorem}[section]
\newtheorem{exm}[thm]{Example}
\newtheorem{lem}[thm]{Lemma}
\newtheorem{definition}[thm]{Definition}
\newtheorem{cor}[thm]{Corollary}
\newcommand{\BlackBox}{\rule{1.5ex}{1.5ex}}  
\newenvironment{proof}{\par\noindent{\bf Proof\
}}{\hfill\BlackBox\\[2mm]}
\newcommand{\Real}{\mathit{R}}
\newcommand{\RPlus}{\textit{R}_{+}}
\newcommand{\Zplus}{\textit{Z}_{+}}
\newcommand{\abs}[1]{\left\vert#1\right\vert}
\newcommand{\set}[1]{\left\{#1\right\}}
\newcommand{\tA}{\tilde{\mathcal{\A}}}
\newcommand{\supp}{\texttt{supp}}
\newcommand{\al}{\alpha}
\newcommand{\allone}{\ell}
\newcommand{\be}{\beta}
\newcommand{\si}{\sigma}
\newcommand{\A}{\mathcal{A}}
\newcommand{\B}{\mathcal{B}}
\newcommand{\C}{\mathcal{C}}
\newcommand{\D}{\mathcal{D}}
\newcommand{\F}{\mathcal{F}}
\newcommand{\J}{\mathcal{J}}
\newcommand{\R}{\textbf{R}}
\newcommand{\bcp}{\set{0,1}\!-\!cp}
\newcommand{\cS}{\mathcal{S}}
\newcommand{\cP}{\mathcal{P}}
\newcommand{\cm}{\mathcal{\textbf{m}}}
\newcommand{\T}{\mathcal{T}}
\newcommand{\bu}{\textbf{u}}
\newcommand{\bv}{\textbf{v}}
\newcommand{\bx}{\textbf{x}}
\newcommand{\bE}{\mathbb{E}}
\newcommand{\SF}{\mathbb{SF}}
\newcommand{\mnrt}{$m$th order $n$-dimensional real tensor }
\newcommand{\mnrts}{$m$th order $n$-dimensional real tensors }
\newcommand{\mnst}{$m$th order $n$-dimensional symmetric tensor }
\newcommand{\mnsts}{$m$th order $n$-dimensional symmetric tensors }
\newcommand{\beq}{\begin{equation}}
\newcommand{\eeq}{\end{equation}}
\newcommand{\bey}{\begin{eqnarray}}
\newcommand{\eey}{\end{eqnarray}}
\newcommand{\beyy}{\begin{eqnarray*}}
\newcommand{\eeyy}{\end{eqnarray*}}
\title{$\set{0,1}$ Completely Positive Tensors and Multi-Hypergraphs}
\author{Changqing Xu\thanks{School of Mathematics and Physics, Suzhou University of Science and Technology, Suzhou, P.R. China. Email: cqxurichard@mail.usts.edu.cn.},
Ziyan Luo\thanks{State Key Laboratory of Rail Traffic Control and Safety, Beijing Jiaotong University, Beijing 100044, P.R. China. Email: starkeynature@hotmail.com}, and
Liqun Qi\thanks{Department of Applied Mathematics, The Hong Kong Polytechnic University, Hung Hum, Hong Kong. Email: liqun.qi@polyu.edu.hk}}
      \def\@setcopyright{}
      \def\serieslogo@{}
 \date{\today}
\begin{document}
\maketitle

\begin{abstract}
Completely positive graphs have been employed to associate with completely positive matrices for characterizing the intrinsic zero patterns. As tensors have been widely recognized as a higher-order extension of matrices, the multi-hypergraph, regarded as a generalization of graphs, is then introduced to associate with tensors for the study of complete positivity. To describe the dependence of the corresponding zero pattern for a special type of completely positive tensors--the $\{0,1\}$ completely positive tensors, the completely positive multi-hypergraph is defined. By characterizing properties of the associated multi-hypergraph, we provide necessary and sufficient conditions for any $(0,1)$ associated tensor to be $\{0,1\}$ completely positive. Furthermore, a necessary and sufficient condition for a uniform multi-hypergraph to be a completely positive multi-hypergraph is proposed as well.

\end{abstract}

\noindent \textbf{keywords:} \  Completely positive tensor; $\{0,1\}$ completely positive tensor; multi-hypergraph ;  $(0,1)$ tensor.\\
\noindent \textbf {AMS Subject Classification}: \   53A45, 15A69.  \\


\section{Introduction}
\setcounter{equation}{0}

Completely positive matrices (cp matrices)\cite{BM2003,Xu04}, as a special type of nonnegative matrices, have wide applications in combinatorial theory including the study of block designs \cite{Hall1967}, and in optimization
especially in creating convex formulations of NP-hard problems, such as the quadratic assignment problem in combinatorial optimization and the polynomial optimization problems \cite{AKK2013,AKKT2014-1,AKKT2014-2,KKT2013,ZF2014}. The verification of cp matrices is generally NP-hard unless for small scale matrices. For example, all $n\times n$ nonnegative symmetric positive semidefinite matrices (usually called the doubly nonnegative (dnn) matrices) are cp-matrices whenever $n\leq 4$ \cite{BP79}. For general case, it is obvious that cp matrices are dnn, but not always true conversely \cite{BAD2009,DA2013,SZ1993}. It depends on some inherited zero pattern which cp matrices possess. To describe this dependence, the tool of graphs was employed and the completely positive graph (cp graph) was introduced which has all its nonnegative associated matrices being cp. Among all those properties on cp-graphs, one of the most important and well-known is a graph to be completely positive if and only if it does not have an odd cycle of length greater than 4 \cite{KB93}. This gives us a very efficient way to verify cp-matrices in terms of cp graphs.

\vskip 2mm

Recently, the concept of cp matrix has been extended to the higher order cp tensor, which admits its definition in a pretty natural way as initiated by Qi et al. in \cite{Qi13}. Analog to the matrix case, the cp tensors were employed to reformulate polynomial optimization problems \cite{PVZ2015}. Numerical optimization for the best fit of completely positive tensors with given length of decomposition was formulated as a nonnegative constrained least-squares problem in Kolda's paper \cite{K2015}. For the verification of cp tensors, an efficient approach in terms of truncated moment sequences for checking completely positive tensors was proposed and an optimization algorithm based on semidefinite relaxation for completely positive tensor decomposition was established by Fan and Zhou in \cite{FZ2014}. This approach was later accelerated with some preprocessing steps by Luo and Qi in \cite{LQ2015}. Some structured and geometrical properties on general cp tensors were also discussed in \cite{LQ2015,QXX14}.

\vskip 2mm

Inspired by the technique of using cp graphs for the characterization of cp matrices, we employ the multi-hypergraph as a tool to describe the inherited zero pattern for cp tensors, which can further assist with the verification of cp-tensors. Multi-hypergraphs appeared in the literature at least in 1988 or even earlier.  Here we use the definitions in \cite{PZ13}.
Due to complexity of cp-tensors for general higher order cases, we will focus on a special type of cp-tensors called the $\{0,1\}$ cp tensor, which is exactly a higher order extension of the $\{0,1\}$ cp matrix that has been well studied in \cite{BX05,BX07} motivated by the applications in many fields such as the pattern recognition \cite{LD06}. In order to verify $\{0,1\}$ cp tensors, we first build up the correspondence between multi-hypergraphs and symmetric tensors which are called the associated tensors. The $(0,1)$ associated tensor is also defined which is uniquely determined by the corresponding multi-hypergraph. Based on the aforementioned one-to-one relationship, we establish the necessary and sufficient conditions for a $(0,1)$ associated tensor to be  $\{0,1\}$ cp  in terms of some structure property possessed by the corresponding uniform multi-hypergraph. For general $\{0,1\}$ cp tensors which are not necessarily to be $(0,1)$ tensors, the cp multi-hypergraph is introduced and the necessary and sufficient condition of this type of multi-hypergraph is proposed. All of these can not only be served for verification for cp tensors, but also build up a bridge between tensor analysis and multi-hypergraph theory.

\vskip 2mm

The rest of the paper is organized as follows. In Section 2, we introduce the associated tensors for multi-hypergraphs. Some related concepts and properties are also presented. In Section 3, the $\{0,1\}$ cp tensors is introduced and the equivalence conditions for $(0,1)$ associated tensors of multi-hypergraphs to be $\{0,1\}$ cp are proposed. In Section 4, the cp multi-hypergraph is defined in terms of $\{0,1\}$ cp tensors, and the necessary and sufficient condition of cp multi-hypergraphs is established.

\vskip 2mm

Throughout the paper we denote by $[n]$ the set $\set{1,2,\ldots, n}$ for a positive integer $n$, $\abs{S}$ for the cardinality of set (or multiset) $S$, and $\Zplus^{n}$ for the set of
 nonnegative integral vectors of dimension $n$.  Denote by $\T_{m,n}$ the set of all \mnrts, and $\cS_{m,n}$ the set of all \mnsts. Denote $R^n$ the real $n$-dimensional Euclidean space and $R^n_+$ the set of all
nonnegative vectors in $R^n$. Let $\F:=\set{0,1}$ and denote by $\F_{m,n}$ the
 set of all \mnrts whose elements are either 1 or 0, and by $\SF_{m,n}$ the set of  all symmetric tensors in $\F_{m,n}$.  As convention we denote
\[ S(m,n):=\set{\tau=(i_1,i_{2},\ldots,i_m):   i_1,i_{2},\ldots ,i_m\in [n]}\]
for the index set of  an element of an $m$th order tensor.  
For a vector $\bx\in \Real^{n}$, we use $\supp(\bx)$ to denote the \emph{support }of $\bx$, i.e., the index set of the nonzero coordinates of $\bx$.\\

\section{The Multi-Hypergraph and Its Associated Tensor}
\setcounter{equation}{0}
In this section, the multi-hypergraph and its associated tensors are recalled and introduced, and some related concepts and properties are presented. \vskip 2mm

\begin{definition}[Definition 7, \cite{PZ13}] Let $V=\set{v_{1},v_{2},\ldots, v_{n}}$. A \emph{multi-hypergraph} $\cP$ is a pair $(V, \bE)$, where $\bE=\set{E_{1},\ldots, E_{N}}$ a set of multisets of $V$. The elements of $V$ are called the \emph{vertices} and the elements of $\bE$ are called the \emph{edges}. Moreover, a multi-hypergraph $\cP$ is called an \emph{$n\times N$ multi-hypergraph} if $\abs{V}=n, \abs{\bE}=N$.
\end{definition}

\begin{definition}[Definition 8, \cite{PZ13}]  A multi-hypergraph $\cP=(V,\bE)$ is called \emph{$m$-uniform} ($m\geq 2$) if for all $E \in \bE$, the cardinal number of the multiset of $E$ is $m$ (including repeated memberships).
\end{definition}

In this paper, we are interested in $m$-uniform multi-hypergraph.  For simplicity, let $V=[n]$. The associated tensor of an $m$-uniform multi-hypergraph is defined as follows. Unless otherwise stated, we will use $\set{i_1,\cdots, i_m}$ to denote the multiset including repeated memberships throughout the paper.

\begin{definition}\label{asso} Let $V=[n]$. A tensor $\A=(a_{i_1\cdots i_m})\in \cS_{m,n}$ is said to be an associated tensor with the $m$-uniform multi-hypergraph $\cP=(V,\bE)$  if for all $(i_1,\cdots,i_m) \in S(m,n)$, $a_{i_1\cdots i_m}\neq 0$ when the multiset $\set{i_1,  \cdots, i_m}$ forms an edge in $\bE$, and $a_{i_1\cdots i_m}=0$ otherwise.
\end{definition}

\vskip 2mm

Let $\al\in \bE$. We use $B(\al)$ to denote the set consisting of all distinct elements of $\al$ and call it the \emph{base} of $\al$. Apparently, any hypergraph (see \cite{Be89} for details) is a multi-hypergraph with $\al=B(\al)$ for each edge $\al\in \bE$. Since the repetition is allowed in each edge for a general multi-hypergraph (i.e., $\B(\al)\subseteq \al$), some partial order can be induced for edges in terms of their bases.
%
Let $\al, \be\in \bE$. $\al$ is said to be \emph{majorized} (\emph{strictly majorized}) by an edge $\be$,  denoted as $\al\preceq \be$ ($\al\prec \be$),  if
$B(\al)\subseteq  B(\be)$ ($B(\al)\subset B(\be)$).  $\al$ and $\be$ are said to be \emph{similar}, denoted as $\al\sim \be$,  if both $\al\preceq \be$ and $\al\succeq \be$ hold.
Similar edges have a common base. 
The majorization defines a partial order on $\bE$  and gives a clustering of edges in $\bE$,
say $\D_{1},\D_{2}, \cdots, \D_{r}$ (possibly with some overlappings).  By Zorn's Lemma, there exists at least one \emph{maximal} (\emph{minimal}) element in each
$\D_{i}$, denoted as $\ell_{i}$ ($\cm_{i}$, respectively) which satisfies
\[  B(\al) \subseteq B(\ell_{i}) ~\left( B(\al) \supseteq B(\cm_{i}),\text{~respectively}\right), \quad  \forall \al\in \D_{i}  \]
%
for each $i=1,2,\ldots,r$.  For an $m$-uniform $n\times N$ multi-hypergraph $\cP=(V,\bE)$, it is obvious that $1\le \abs{\cm_{i}}\le \abs{\ell_{i}}\le m$. Denote
\[ rk(\cP):=\max\limits_{1\leq i\leq r}\{\abs{B(\ell_{i})}: \ell_{i} \text{ is a maximal edge of}~\D_{i}\},\]
and
\[ ck(\cP):=\min\limits_{1\leq i\leq r}\{\abs{B(\cm_{i})}: \cm_{i} \text{ is a minimal edge of}~\D_{i}\}\]
$rk(\cP)$($ck(\cP)$, respectively) is called the \emph{rank} (\emph{co-rank}) of $\cP$.  For an $m$-uniform hypergraph $\cP$ , we have $rk(\cP)=ck(\cP)=m$.
\vskip 2mm

\indent A multi-hypergraph $\cP$ may have several maximal (minimal) edges with different bases. But its rank (co-rank) shall be a unique number by definition.
For any $\al\in S(m,n)$, we denote by $M(\al)$ the multiset generated by $\al$ and define the \emph{complete $m$-multiset determined by $\al$} as
\[ \D_{\al}:=\set{\eta\in S(m,n): M(\eta)\preceq M(\al)}. \]

\begin{lem}\label{lem0}
Let $\abs{B(\al)}=r$ where $\al\in S(m,n)$.  Then
\beq\label{eq:le00}
\abs{\D_{\al}} = r^{m}.
\eeq
\end{lem}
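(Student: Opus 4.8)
The plan is to reduce the set-membership condition defining $\D_\al$ to a simple constraint on the coordinates of $\eta$, after which the count follows from the product rule. The whole statement is essentially a counting identity once the defining relation is read correctly, so I would spend almost all the effort on the translation step and none on the arithmetic.

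First I would unwind the majorization relation $M(\eta)\preceq M(\al)$. Applying the definition of $\preceq$ to the multisets $M(\eta)$ and $M(\al)$, this relation holds precisely when $B(M(\eta))\subseteq B(M(\al))$. Since $M(\al)$ is the multiset generated by $\al=(i_1,\ldots,i_m)$, its base is exactly the set of distinct entries of $\al$, so $B(M(\al))=B(\al)$, a set of cardinality $r$. Likewise, for $\eta=(j_1,\ldots,j_m)\in S(m,n)$, the base $B(M(\eta))$ is the set $\set{j_1,\ldots,j_m}$ of distinct coordinates of $\eta$. Hence the condition $M(\eta)\preceq M(\al)$ is equivalent to $\set{j_1,\ldots,j_m}\subseteq B(\al)$, which in turn is equivalent to the coordinatewise requirement that $j_k\in B(\al)$ for every $k\in[m]$. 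With this reduction in hand, $\D_\al$ is exactly the set of ordered $m$-tuples $(j_1,\ldots,j_m)$ with each entry drawn from the $r$-element set $B(\al)$; by the product rule there are $r^m$ such tuples, giving \eqref{eq:le00}.

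The only real subtlety, and the point I would be most careful about, is that $\eta$ ranges over ordered tuples in $S(m,n)$ rather than over multisets, so distinct tuples that generate the same multiset must be counted separately. This is precisely why the answer is the ordered count $r^m$ rather than a count of multisets; reading the defining inequality as base containment while keeping the ambient index set $S(m,n)$ ordered is the whole content of the lemma. Once that is fixed, no estimate or case analysis is needed and the product rule closes the argument immediately.
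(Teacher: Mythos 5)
Your proof is correct and follows essentially the same route as the paper's: both reduce the condition $M(\eta)\preceq M(\al)$ to the requirement that every coordinate of $\eta$ lie in the $r$-element set $B(\al)$, and then count the $r^m$ ordered tuples by the product rule. Your version is slightly more explicit about unwinding the majorization definition and about the ordered-versus-multiset distinction, but the argument is the same.
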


\begin{proof}
Let $\al\in S(m,n)$, and $\abs{B(\al)}=r$.  We may assume w.l.g. that $B(\al)=\set{s_{1}, s_{2},\ldots, s_{r}},  1\le s_{1}< s_{2}<\ldots < s_{r}\le n$. For each
$\eta:=(i_{1},i_{2},\ldots, i_{m})\in \D_{\al}$, its coordinate $i_{k}$ can be any number chosen from $B(\al)$ for each $k\in [m]$, and thus there are $r^{m}$ choices, which leads to the desired assertion.
\end{proof}

\indent The following example is presented for the illustration of the above concepts.
\begin{exm}\label{exm01} Let $\cP=(V, \bE)$ be a $3$-uniform multi-hypergraph with its associated tensor  $\A=(A_{ijk})\in \cS_{3,3}$  whose nonzero elements are listed as below:
\[ A_{112}=A_{122}=A_{133}=A_{113}=A_{223}=A_{111}=A_{222}=A_{333}=1 \]
Three complete 3-multisets of  $\bE$ given by the majorization are
\[ \D_{1} =\set{\set{1,1,2},\set{1,2,1},\set{2,1,1}, \set{1,2,2},\set{2,1,2},\set{2,2,1},\set{1,1,1},\set{2,2,2}}, \]
\[ \D_{2} =\set{\set{1,1,3},\set{1,3,1},\set{3,1,1}, \set{1,3,3},\set{3,1,3},\set{3,3,1},\set{1,1,1},\set{3,3,3}}, \]
\[ \D_{3} =\set{\set{2,2,3},\set{2,3,2},\set{3,2,2}, \set{2,3,3},\set{3,2,3},\set{3,3,2},\set{2,2,2},\set{3,3,3}}. \]
There are six maximal edges in each $\D_{i}$. In fact, all the edges but the three minimal edges $\set{1,1,1},\set{2,2,2},\set{3,3,3}$ are the maximal edges.
So $rk(\cP)=2, ck(\cP)=1$.  Note that $\set{\D_{1},\D_{2},\D_{3}}$ does not form a partition of  $\bE\subset S(3,3)$ since
\[ \set{1,1,1}\in \D_{1}\cap \D_{2}\neq \emptyset. \]
\end{exm}

\section{$\{0,1\}$ cp Tensors and $(0,1)$ Associated Tensors}
\setcounter{equation}{0}
In this section, we will discuss the condition for $(0,1)$ associated tensors of uniform multi-hypergraphs to be $\{0,1\}$ cp tensors. Before stating the main theorem, the involved concepts are introduced as a start. \vskip 2mm

\begin{definition} An \mnst $\A$ is called a \emph{completely positive tensor}, or \emph{cp tensor} for short,  if $\A$ can be decomposed as
\beq\label{eq:cp01}
\A = \sum\limits_{j=1}^{q} \bu_{j}^{m}, \quad   \bu_{j}\in \RPlus^{n}, \forall j\in [q]
\eeq
The smallest number $q$ satisfying (\ref{eq:cp01}) is called the \emph{cp-rank} of $\A$.  Moreover, $\A$ is called to be \emph{$\bcp$} if  $\bu_{j}\in\F^{n}$ for each $j\in [q]$ in (\ref{eq:cp01}).
\end{definition}

Note that a $\bcp$ tensor may not be a $(0,1)$ tensor, and a cp-tensor with all entries in $\F$ is not necessarily $\bcp$. \\

By Definition \ref{asso}, it is obvious that for a given $m$-uniform multi-hypergraph $\cP$, its associated tensors are infinitely many since we can put any nonzero scalars as entries in those positions corresponding to edges of the multi-hypergraph. If we further restrict the associated tensor $\A$ to be in $\SF_{m,n}$, then the correspondence turns out to be one-to-one, i.e.,
\beq \label{01} \{i_{1}, i_{2},\ldots, i_{m}\}\in \bE  \Longleftrightarrow a_{i_{1}i_{2}\ldots i_{m}}=1. \eeq
For any tensor $\A=(a_{\si})\in \T_{m,n}$, a \emph{tensor pattern} $\tA=(\tilde{a}_{\si})\in \F_{m,n}$ is defined in the way that for any $\si\in S(m,n)$, $\tilde{a}_{\si}=1$ if $a_{\si}\neq 0$ and $\tilde{a}_{\si}=0$ otherwise. Apparently, all associated tensors for an $m$-uniform multi-hypergraph share the same tensor pattern which is exactly the corresponding $(0,1)$ associated tensor. The pattern of a tensor $\A$ reflects the distribution of zero (nonzero) elements of  $\A$ and thus can be used to characterize its  spectral property e.g. \cite{CPZ08,CPZ09,CPZ11,FGH09,Pe10,YY10,YY11} and combinatorial properties such as the irreducibility.

\begin{definition}[Definition 2.1, \cite{CPZ08}] An \mnrt $\A=(a_{i_1\cdots i_m})\in \T_{m,n}$ is called \emph{reducible}   if there is a nonempty proper subset $I\subset [n]$ such that
\beq\label{eq:defreducible}
a_{i_{1}\ldots i_{m}} =0, ~~\forall i_{1}\in I,~\forall i_{2}, \ldots,i_{m} \notin I.
\eeq
$\A$ is called \emph{irreducible} if it is not reducible.
\end{definition}

Recall that a \emph{slice} of  tensor $\A\in \T_{m,n}$ is defined as a sub-tensor of order $m-1$ obtained from $\A$ with some index fixed.  A \emph{zero slice}, or a \emph{trivial} slice, is a slice
whose elements are all zeros. Given a nonempty subset $I:=\set{s_{1}, s_{2},\ldots, s_{r}}$ of $[n]$, a \emph{principal subtensor} of $\A$ determined by $I$, is defined as the $m$th order $r$ dimensional tensor $\B=(A_{i_{1}i_{2}\ldots i_{m}})$ where each $i_{k}$ is constrained in $I$. A \emph{zero block} is a principal subtensor whose entries are all zero. Obviously, an irreducible tensor has no zero slice nor any zero block.

\vskip 2mm

Reducibility is a pattern property for tensors. By employing the permutational similarity property, we can decompose any $(0,1)$ reducible tensor into a direct sum of a finite number of low dimensional irreducible tensors and a zero tensor in the permutational similar sense. Before stating this result, some related concepts are recalled here. Let $\A, \B\in \T_{m,n}$. We say that $\A$ is \emph{permutational similar} to $\B$, denoted as $\A\sim_{p} \B$,  if there exists a permutation matrix $P\in \Real^{n\times n}$
such that
\[ \B = \A\times_{1} P\times_{2}  P\times_{3} \cdots \times_{m} P, \]
where $\tA:=\A\times_{k} P=(\tilde{a}_{i_{1}\ldots i_{m}})\in \T_{m,n}$ is defined as
\[ \tilde{a}_{i_{1}\ldots i_{k-1}i_{k}i_{k+1}\ldots i_{m}} = \sum\limits_{j=1}^{n} a_{i_{1}\ldots i_{k-1}j i_{k+1}\ldots i_{m}}p_{i_{k}j} \]

\vskip 2mm

Utilizing the permutational similarity of tensors, we can build up some identical relation among their corresponding multi-hypergraphs. Let $\cP_{1}=(V_{1}, \bE_{1})$ and $\cP_{2}=(V_{2}, \bE_{2})$ be two given $m$-uniform multi-hypergraphs with their $(0,1)$ associated tensors $\A$ and $\B$ respectively.
Then $\A\sim_{p} \B$ if and only if there exists a bijection $\phi$ from $V_{1}$ to $V_{2}$ such that
\[  \{i_{1},i_{2},\ldots,i_{m}\}\in \bE_{1}  \mapsto \{\phi(i_{1}),\phi(i_{2}),\ldots,\phi(i_{m})\}\in \bE_{2} \]
that is, $\cP(\B)$ is the multi-hypergraph obtained from $\cP(\A)$ by the reordering of its vertices, and thus they are identical in this sense. \\

Let $\A_{i}=(a_{\si}^{(i)})\in \T_{m,n_{i}}, i=1,2$ and $n_{1}+n_{2}=n$.  The \emph{direct sum} of $\A_{1}$ and $\A_{2}$, denoted by
\[ \A = \A_{1}\oplus \A_{2} =(a_{i_{1}\ldots i_{m}}), \]
is defined by
\[ a_{i_{1}\ldots i_{m}}=\begin{cases} a_{i_{1}\ldots i_{m}}^{(1)}  & \text{if} i_{1},\ldots, i_{m}\in [n_{1}], \\
                                                                            a_{i_{1}\ldots i_{m}}^{(2)}  & \text{if} i_{1},\ldots, i_{m}\in n_{1}+[n_{2}], \\
      0& \text{otherwise}.
\end{cases}  \]
Here $a+S$ is defined as the translation of set $S$,  i.e., $a+S=\set{a+s: s\in S}$.\\

\vskip 2mm

Now we are in a position to describe the decomposition for tensors in the sense of permutation similarity.

\begin{lem}\label{lem1}
Let $\A\in \SF_{m,n}$, where $m\ge 2, n\ge 1$. Then
\beq\label{eq:irredufact}
\A\sim_{p} \A_{1}\oplus \A_{2}\oplus \ldots \oplus \A_{r}\oplus \mathcal{O}_{r+1}
\eeq
where $\A_{i}\in \SF_{m,n_{i}}$ is irreducible, $\mathcal{O}_{r+1}$ is a zero tensor of order $m$ and dimension $n_{r+1}$, and $n_{1}+\ldots + n_{r+1}=n$.
\end{lem}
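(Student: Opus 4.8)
The plan is to prove this by a support-connectivity argument combined with induction on the dimension $n$. The key point is that permutational similarity lets us relabel the vertices $[n]$ freely through a permutation matrix $P$, and since $\A\in\SF_{m,n}$, both the symmetry and the $\set{0,1}$ property are preserved under $\A\mapsto\A\times_1 P\times_2\cdots\times_m P$. Hence it suffices to exhibit one relabeling under which the support of $\A$ becomes block-diagonal with the advertised pieces, and then read off (\ref{eq:irredufact}) as an identity of patterns.

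First I would isolate the zero block. Call an index $i$ \emph{trivial} if the $i$-th slice of $\A$ is a zero slice; by symmetry this is equivalent to $a_\sigma=0$ for every $\sigma\in S(m,n)$ with $i\in M(\sigma)$. Let $Z$ be the set of trivial indices. Permuting the elements of $Z$ to the last coordinates, they contribute exactly the zero tensor $\mathcal{O}_{r+1}$ of dimension $n_{r+1}=\abs{Z}$, and no nonzero entry of $\A$ meets $Z$. It then remains to treat the principal subtensor on $[n]\setminus Z$, every slice of which is nonzero. On $[n]\setminus Z$ I would define a relation by declaring $i$ and $j$ \emph{linked} whenever some nonzero entry $a_\sigma$ has $\set{i,j}\subseteq M(\sigma)$, and pass to the transitive closure to obtain classes $C_1,\ldots,C_r$. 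Because indices in distinct classes never co-occur in a nonzero entry, every nonzero $a_\sigma$ has $M(\sigma)$ contained in a single class; relabeling so that each $C_i$ is a block of consecutive coordinates therefore puts $\A$ (restricted to $[n]\setminus Z$) into the block-diagonal form $\A_1\oplus\cdots\oplus\A_r$, where $\A_i\in\SF_{m,n_i}$ is the principal subtensor on $C_i$ and $n_i=\abs{C_i}$. A diagonal-only index forms a singleton class, for which $\A_i$ is one-dimensional and hence vacuously irreducible. The routine verification that permutation similarity realizes this relabeling completes the direct-sum part.

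The main obstacle is the final claim that each $\A_i$ is \emph{irreducible}. Connectivity of the support guarantees that $\A_i$ has no trivial slice, but for $m\ge 3$ this is weaker than irreducibility: the reducibility condition only forbids nonzero entries whose index multiset meets a witness set $I$ in a \emph{single} coordinate, so an entry that places two (or more) of its indices in $I$ and the rest outside does not obstruct a reducibility witness, even though such an entry links $I$ to its complement. Consequently the combinatorial linking used to build the blocks does not by itself rule out a reducibility witness inside $C_i$, and the heart of the argument is to reconcile the (strictly weaker) tensor reducibility of the definition above with genuine block structure. I would attack this by strong induction on $n_i$: assuming a reducibility witness $I\subsetneq C_i$ existed, I would analyze the induced supports on $I$, on $C_i\setminus I$, and on the crossing entries, aiming to produce either a trivial slice (contradicting the peeling step) or a disconnection of $C_i$ (contradicting the choice of class). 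Carrying out this case analysis for higher order $m$, where a crossing entry can distribute its $m$ indices across $I$ in many proportions, is the delicate step on which the whole proof turns, and it is where I would expect to spend the bulk of the effort.
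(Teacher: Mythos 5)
Your peeling of zero slices and your passage to connected components of the co-occurrence relation are correct, and they are a genuinely different (and more transparent) route than the paper's: the paper argues by induction on $n$, takes a reducibility witness $I$ as in (\ref{eq:leprf1}), permutes it to the front, and asserts ``it follows readily'' that $\tA=\A_{11}\oplus \A_{22}$ in (\ref{eq:directsum}). That is exactly the inference you refuse to make, and you are right to refuse it: for $m\ge 3$ the witness $I$ only annihilates entries having exactly one index in $I$, not crossing entries with two or more indices in $I$, so the paper's own proof silently commits the error you isolate as the ``main obstacle.''

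That said, your proposal does contain a genuine gap, and it cannot be closed: the deferred final step---showing each connected block $\A_i$ is irreducible in the sense of Definition 3.2 (Chang--Pearson--Zhang)---is false, so the strong-induction case analysis you sketch cannot succeed. Concretely, take $m=3$, $n=2$, and $\A\in\SF_{3,2}$ with $a_{122}=a_{212}=a_{221}=1$ and all other entries $0$. This tensor is symmetric, has no zero slice, and its support is connected (indices $1$ and $2$ co-occur), so your construction returns $r=1$ and $\A_1=\A$; yet $I=\set{2}$ is a reducibility witness since $a_{211}=0$, and $\A$ is clearly not permutation-similar to any nontrivial direct sum. Hence Lemma \ref{lem1} itself fails as stated, and the obstruction you flagged is a counterexample, not a delicate step. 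The statement becomes true---and is proved by exactly the portion of your argument you did complete---if ``irreducible'' is read in the weak sense of \cite{FGH09}, i.e.\ connectivity of the co-occurrence structure; this weaker notion also suffices for the paper's downstream use, because the contradiction in the proof of Lemma \ref{lem2} only needs the true implication that a nontrivial direct sum is reducible (take $I$ to be the index set of one block). The right repair is therefore to restate and use the lemma with weak irreducibility, not to pursue the case analysis on reducibility witnesses.
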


\begin{proof} The result is trivial if $\A$ is irreducible tensor. Now we assume that $\A\in \SF_{m,n}$ is a reducible tensor. We will use induction to prove the desired statement. For $n=1$, the reducibility implies that $\A=0$. The statement holds by setting $r=0$. Assume that for all $k$ satifying $1\leq l\leq n$ with $n \geq 1$, the statement holds. They for the case of $l+1$, there exists a nonempty subset $I$ of $[l+1]$ such that
\beq\label{eq:leprf1}
 A_{i_{1}i_{2}\ldots i_{m}}=0, \forall i_{1}\in I, i_{2},\ldots, i_{m}\notin I
\eeq
Let $\cP=(V, \bE)$ be the multi-hypergraph with $\A$ as an associated tensor, and we assume w.l.g. that
\[ I:=\set{k_{1}, k_{2},\ldots, k_{r}}, 1\le k_{1} <k_2 <\cdots < k_r\le l+1.\]
Then we let $\phi: [l+1]\to [l+1]$ be an one-to-one correspondence such that
\[ \phi(k_{i})=i, \quad  \forall\  i=1,2,\ldots, r. \]
and $\phi$ maps $[l+1]\backslash I$ to $[l+1]\backslash [r]$.   $\phi$ can be regarded as a permutation on $[l+1]$, and so there is a permutation matrix $P$ corresponding to $\phi$.
Actually if we define $P=(p_{ij})\in \F^{(l+1)\times (l+1)}$ by
\[ p_{ij}=1 \quad \texttt{iff}\  j=\phi(i)  \]
for each $i\in [l+1]$. It follows readily that
\beq\label{eq:directsum}
\tA:= \A\times_{1}P\times_{2}P\times_{3} \ldots \times_{m}P  = \A_{11}\oplus \A_{22}
\eeq
where $\A_{11}\in \SF_{m,r}, \A_{22}\in \SF_{m,l+1-r}$. Note that $r$, $l+1-r\leq n$,  the desired decomposition can be proved by the induction.
\end{proof}

\vskip 2mm 
Lemma \ref{lem1} shows that a tensor $\A\in \SF_{m,n}$ can always be decomposed into the direct sum of irreducible tensors, possibly with a zero block.  The following lemma is dedicated to the necessary and sufficient conditions of $\bcp$ property for irreducible $(0,1)$ tensors.

\vskip 2mm

\begin{lem}\label{lem2}
Let $m\ge 2, n\ge 1$ be two positive integers, and $\A\in \SF_{m,n}$ be irreducible.  Then the following statements are equivalent:
(i) $\A$ is $\bcp$;

(ii) $\A=\J$ is the all-$1$ tensor;

(iii) the multi-hypergraph $\cP$ with associated tensor $\A$ is a complete block.
\end{lem}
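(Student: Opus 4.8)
The plan is to dispose of the two ``soft'' equivalences first and then concentrate on the single substantive implication. The equivalence (ii)$\Leftrightarrow$(iii) is essentially a restatement through the one-to-one correspondence (\ref{01}) between $(0,1)$ associated tensors and $m$-uniform multi-hypergraphs: the complete block is precisely the multi-hypergraph all of whose $m$-multisets on $[n]$ are edges, which under (\ref{01}) forces every entry of $\A$ to equal $1$, i.e.\ $\A=\J$, and conversely. The implication (ii)$\Rightarrow$(i) is immediate, since the all-$1$ tensor factors as $\J=\bu^{m}$ with $\bu=(1,\dots,1)^{\top}\in\F^{n}$, exhibiting a $\bcp$ decomposition of length one. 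So the heart of the lemma is (i)$\Rightarrow$(ii), and the remaining implications close the cycle (i)$\Rightarrow$(ii)$\Rightarrow$(i) while (ii)$\Leftrightarrow$(iii) ties in the third statement.

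To prove (i)$\Rightarrow$(ii) I would start from a $\bcp$ decomposition $\A=\sum_{j=1}^{q}\bu_{j}^{m}$ with each $\bu_{j}\in\F^{n}$, and set $S_{j}:=\supp(\bu_{j})$. Since $\bu_{j}$ is a $\{0,1\}$-vector, it is the indicator of $S_{j}$, and $(\bu_{j}^{m})_{i_{1}\ldots i_{m}}=1$ exactly when $\{i_{1},\ldots,i_{m}\}\subseteq S_{j}$ and is $0$ otherwise; thus each summand $\bu_{j}^{m}$ is the complete block supported on the index set $S_{j}$. The key is then to read off structural constraints on the family $\{S_{j}\}$ from the twin hypotheses that $\A$ is a $(0,1)$ tensor and that $\A$ is irreducible.

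The three steps I expect to carry the argument are as follows. First, a diagonal entry gives $a_{i\ldots i}=\sum_{j}\bu_{j}(i)=\bigl|\{\,j:i\in S_{j}\,\}\bigr|$, and since $a_{i\ldots i}\in\{0,1\}$ every vertex $i$ lies in at most one support, so the nonempty supports are pairwise disjoint. Second, if some $i$ lay in no $S_{j}$, then every entry of $\A$ carrying an index equal to $i$ would vanish, producing a zero slice and contradicting irreducibility; hence the nonempty supports also cover $[n]$, i.e.\ they partition $[n]$. Third, if there were two distinct nonempty supports, say $S_{1}$ and $S_{2}$, then taking $I:=S_{1}$ (nonempty and, by disjointness from $S_{2}$, proper) one checks directly that $a_{i_{1}\ldots i_{m}}=0$ for all $i_{1}\in I$ and $i_{2},\ldots,i_{m}\notin I$, which is exactly the reducibility condition (\ref{eq:defreducible}) --- again a contradiction. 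Therefore there is a single nonempty support, which by the covering property equals $[n]$; multiplicity greater than one is excluded because two copies of the all-one vector would force $a_{1\ldots1}=2$. Consequently $\A=\bu^{m}$ with $\supp(\bu)=[n]$, that is $\A=\J$.

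The main obstacle is the third step: converting the local ``at most one support per vertex'' information into a genuine reduction witnessed by an explicit principal index set. Disjointness and covering are diagonal and single-slice observations, but ruling out two blocks requires producing the set $I$ and verifying the off-diagonal vanishing against the definition (\ref{eq:defreducible}). Care is also needed for the degenerate case $n=1$, where irreducibility is vacuous and the claim reduces to the remark that the only $\bcp$ decomposition of a $(0,1)$ scalar forces its entry to be $1$.
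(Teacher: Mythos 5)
Your proof is correct, and its skeleton (diagonal entries force pairwise disjoint supports; irreducibility forces a single support equal to $[n]$) matches the paper's, but the mechanism you use for the decisive contradiction is genuinely more direct than the paper's. Given two distinct nonempty supports $S_{1},S_{2}$, you exhibit the reducing set $I=S_{1}$ and verify the vanishing condition (\ref{eq:defreducible}) entrywise: the $j=1$ summand vanishes because $\bu_{1}(i_{2})=0$ for $i_{2}\notin S_{1}$ (here $m\ge 2$ is used), and every summand with $j\ne 1$ vanishes because $\bu_{j}(i_{1})=0$ by disjointness. The paper instead clusters the edges into classes $\D_{i}=\set{\si\in\bE: B(\si)\subseteq\supp(\bu_{i})}$, reassembles $\cP$ as a union of sub-multi-hypergraphs $\cP_{i}$, and invokes permutational similarity to conclude $\A\sim_{p}\A_{1}\oplus\cdots\oplus\A_{q}$, contradicting irreducibility --- i.e., it routes through the direct-sum apparatus of Lemma \ref{lem1}. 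Your route buys two things: it is more elementary (no permutation-similarity machinery needed), and it is slightly more airtight, since the paper's claim $\cP=\cP_{1}\cup\cdots\cup\cP_{q}$ silently ignores vertices lying in no support (which would contribute a zero block to the direct sum), whereas your Step 2 disposes of exactly those vertices by the zero-slice argument; the paper only handles coverage after the fact, via $I=[n]\setminus\supp(\bu)$ once $q=1$ is known. One small caveat, which you share with the paper rather than introduce: at $n=1$ the lemma is literally false for the zero scalar, since $0=\bu^{m}$ with $\bu=(0)\in\F^{1}$ is a legitimate $\bcp$ decomposition while reducibility is vacuous in dimension one; so your closing remark that the only $\bcp$ decomposition of a $(0,1)$ scalar forces the entry to be $1$ is not quite right --- one must tacitly exclude the zero tensor (equivalently, zero factors $\bu_{j}$), exactly as the paper implicitly does.
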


\begin{proof}
If $\A=\J$, then surely $\A$ is $\bcp$ since $\A=\allone^{m}$ with $\allone=(1,1,\ldots,1)^{\top}$. \\
\indent  Conversely, we let $\A\in \SF_{m,n}$ be a $\bcp$ tensor. Then $\A$ has a decomposition (\ref{eq:cp01}) with
\[ \bu_{j}=(u_{1j},u_{2j},\ldots, u_{nj})^{\top}\in \F^{n}. \]
Then we have
\[a_{i_{1}i_{2}\ldots i_{m}} = \sum\limits_{j=1}^{q} u_{i_{1}j}u_{i_{2}j}\ldots u_{i_{m}j}, ~~ \forall (i_{1},i_{2},\ldots,i_{m})\in S(m,n).  \]
We will first show that $q=1$ in decomposition (\ref{eq:cp01}).  Suppose that $q>1$.  If there exist a pair of positive integers $(s,t): 1\le s < t \le q$ such that
\[  k\in \supp(\bu_{s})\cap \supp(\bu_{t})  \]
for some $k\in [n]$, then $u_{ks}=u_{kt}=1$. Hence we have
\beyy
A_{kk\ldots k} & = & \sum\limits_{j=1}^{q} u_{kj}u_{kj}\ldots u_{kj} \\
                          & = & \sum\limits_{j=1}^{q} u_{kj}^{m}\\
                          &\ge & u_{ks}^{m} + u_{kt}^{m}=2
\eeyy
a contradiction to the assumption that $\A$ is a (0,1) tensor.  Thus we have
\beq\label{eq: disjointsupp}
\supp(\bu_{i})\cap \supp(\bu_{j}) = \emptyset, \forall 1\le i < j\le q
\eeq
Now we define
\[ \D_{i}=\set{\si\in \bE: B(\si)\subseteq \supp(\bu_{i})}, \forall i=1,2,\ldots,q \]
Then we get $\set{\D_{1},\D_{2}, \ldots,\D_{q}}$ each a subset of $\bE$,  and
\[ \D_{i}\cap \D_{j}=\emptyset, \quad   \forall 1\le i < j\le q   \]
Denote $V_{i}=V(\D_{i})$ and $\cP_{i}:=(V_{i}, \D_{i})$ for $i=1,2,\ldots,q$.  Then
\[ \cP = \cP_{1}\cup \cP_{2}\cup \ldots \cup \cP_{q} \]
where $\cP=(V, \bE)$ is the multi-hypergraph associated with $\A$. It turns that $\A\sim_{p} \A_{1}\oplus \ldots \oplus\A_{q}$ where $\A_{i}$ is the adjacency tensor of $\cP_{i}$, a
contradiction to the hypothesis that $\A$ is irreducible.  Hence $q=1$,  and thus there exists a vector $\bu=(u_{1},\ldots,u_{n})^{\top}\in \F^{n}$ such that $\A=\bu^{m}$. \\
\indent To prove that $\A=\J=\allone^{m}$, we need only to show that  $\supp(\bu)=[n]$. In fact, if $\supp(\bu)$ is a proper subset of $[n]$, then by setting
$I=[n]\backslash \supp(\bu)$, we show that $\A$ is reducible by definition, which is a contradiction to the hypothesis. Thus $\supp(\bu)=[n]$ and  $\A=\J$. Thus the equivalence between (i) and (ii) is obtained.
\indent The remaining part of the lemma is immediate by definition.
\end{proof}

 \indent From Lemma \ref{lem2} and its proof, we can get the following equivalences for $\bcp$ tensors.
\begin{thm}\label{thm1}
Let $m\ge 2, n\ge 1$ be two positive integers. Suppose that $\A\in \SF_{m,n}$ have no zero blocks and is associated with multi-hypergraph $\cP=(V,\bE)$.  Then the following are equivalent:
\begin{description}
\item[(1)]  $\A$ is $\bcp$\  tensor.
\item[(2)]  $\cP$ can be decomposed as the union of some complete blocks $\cP_{i}$ of size $n_{i}$ where $n_{1}+\ldots +n_{q}=n$.
\item[(3)]  $\A$ can be written in form (\ref{eq:cp01}) and with $\bu_j\in\F^n$ satisfying  $U^{T}U=diag(n_{1},\ldots,n_{q})$ where $U=[\bu_{1},\ldots,\bu_{q}]$.
\end{description}
\end{thm}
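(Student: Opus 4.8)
The plan is to reduce the whole statement to the irreducible case already settled in Lemma~\ref{lem2}, using the direct-sum decomposition of Lemma~\ref{lem1}. Since $\A$ has no zero blocks, applying Lemma~\ref{lem1} produces a permutation matrix $P$ with $\A\sim_p \A_1\oplus\cdots\oplus\A_r$, where each $\A_i\in\SF_{m,n_i}$ is irreducible and $n_1+\cdots+n_r=n$; indeed the trailing zero tensor $\mathcal{O}_{r+1}$ of that lemma must be absent here, for it would itself constitute a zero block. I would first record two easy stability facts. First, the $\bcp$ property is invariant under permutation similarity: if $\A=\sum_j\bu_j^m$ with $\bu_j\in\F^n$, then applying $P$ in every mode gives $\A\times_1 P\times_2\cdots\times_m P=\sum_j(P\bu_j)^m$, and $P\bu_j\in\F^n$ since $P$ merely permutes the coordinates of a $(0,1)$ vector. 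Second, a direct sum of $\bcp$ tensors is $\bcp$: embed each block decomposition into $\F^n$ by padding with zeros, and note that the $m$th power of a vector supported in one block contributes only to within-block positions.

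The crux of the argument is the converse of the second fact: if a direct sum $\A_1\oplus\cdots\oplus\A_r$ is $\bcp$, then each summand $\A_i$ is $\bcp$. I would prove this by the support-separation idea already exploited in Lemma~\ref{lem2}. Write $\A=\sum_{j=1}^q\bu_j^m$ with $\bu_j\in\F^n$, and consider any index $k$ lying in block $s$ together with any index $\ell$ lying in a different block $t$. The entry of $\A$ at the mixed position $(k,\ell,\ldots,\ell)$ is zero by definition of the direct sum, yet it also equals $\sum_j u_{kj}u_{\ell j}^{m-1}$, a sum of nonnegative terms; hence $u_{kj}u_{\ell j}=0$ for every $j$. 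It follows that no single $\bu_j$ can have support meeting two distinct blocks, so each $\bu_j$ is supported inside one block, and grouping the vectors block by block exhibits each $\A_i$ as $\bcp$.

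With these facts in hand the equivalences follow quickly. For (1)$\Rightarrow$(2): if $\A$ is $\bcp$, then by permutation invariance so is $\A_1\oplus\cdots\oplus\A_r$, whence each irreducible $\A_i$ is $\bcp$, and Lemma~\ref{lem2} forces $\A_i=\J$ on its block, i.e.\ the corresponding piece $\cP_i$ is a complete block; as the blocks are vertex-disjoint and exhaust $[n]$, $\cP$ is the union of the complete blocks $\cP_i$ of sizes $n_i$. For (2)$\Rightarrow$(3): a complete block on a vertex set $V_j$ has all-$1$ associated tensor, namely $\bu_j^m$ with $\bu_j\in\F^n$ the indicator vector of $V_j$; summing over the disjoint blocks gives $\A=\sum_j\bu_j^m$, and since the supports $V_j$ are pairwise disjoint with $\abs{V_j}=n_j$, the matrix $U=[\bu_1,\ldots,\bu_q]$ satisfies $U^\top U=\mathrm{diag}(n_1,\ldots,n_q)$. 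Finally (3)$\Rightarrow$(1) is immediate, since the displayed decomposition with $\bu_j\in\F^n$ is precisely the definition of $\bcp$.

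The step I expect to be the main obstacle is the converse stability fact, namely that a $\bcp$ direct sum has $\bcp$ summands. It is the only place where the nonnegativity of the rank-one terms is genuinely used, and one must argue carefully that the vanishing of mixed-position entries forces the supports of the decomposition vectors to respect the \emph{block} structure, not merely the support pattern of $\A$ itself. Everything else amounts to bookkeeping assembled from Lemmas~\ref{lem1} and~\ref{lem2}.
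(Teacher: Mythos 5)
Your proof is correct and follows essentially the same route as the paper: decompose $\A$ via Lemma~\ref{lem1} into irreducible direct summands (the trailing zero tensor being excluded since $\A$ has no zero blocks), apply Lemma~\ref{lem2} to identify each summand with a complete block, and read off $U^{T}U=diag(n_{1},\ldots,n_{q})$ from the disjointness of the supports. The one genuine difference is to your credit: the paper simply asserts in its $(1)\Rightarrow(2)$ step that each direct summand $\A_{i}$ of a $\bcp$ tensor is itself $\bcp$, whereas you prove this inheritance explicitly via the mixed-entry argument ($a_{k\ell\cdots\ell}=\sum_{j}u_{kj}u_{\ell j}^{m-1}=0$ forces each $\bu_{j}$ to be supported inside a single block), thereby filling a gap the paper leaves implicit.
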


\begin{proof} To prove $(1) \Leftrightarrow (2)$, we first let $\A\in \SF_{m,n}$ be a $\bcp$ tensor.  Then by Lemma \ref{lem1} $\A$ can be written in form (\ref{eq:irredufact}) where each $\A_{i}$
is an irreducible $\bcp$ tensor of $m$th order $n_{i}$-dimension (no zero block there since $\A$ has no zero block).  By Lemma \ref{lem2}, $\A_{i}$ is associated
with a multi-hypergraph $\cP_{i}=(V_{i}, \bE_{i})$ where $\abs{V_{i}}=n_{i}$ for $i=1,2,\ldots, q$, $n_{1}+n_{2}+\ldots +n_{q}=n$.  For each $i\in [q]$, by Lemma \ref{lem2},  $\cP_{i}$ is
the complete block of  dimension $n_{i}$ (since $\A_{i}$ is irreducible and  $\bcp$ ). Thus $(1)\Rightarrow (2)$ is proved.  The proof of  $(2)\Rightarrow (1)$ is immediate if we
note that the decomposition (\ref{eq:cp01}) holds by take $\supp(\bu_{i}) =V_{i}$ for $i=1,2,\ldots, q$. \\
\indent Now we show $(1)\Leftrightarrow (3)$.  First we assume that $\A\in \SF_{m,n}$ is $\bcp$.  Then from the proof of Lemma \ref{lem2} there exist some vectors
$\bu_{j}\in \F^{n}$ such that (\ref{eq:cp01})  holds, and
\beq\label{eq: disjointsupp}
\supp(\bu_{i})\cap \supp(\bu_{j}) = \emptyset, \forall 1\le i < j\le q
\eeq
It follows that $U^{T}U=diag(n_{1},\ldots, n_{q})$ for $U=[\bu_{1},\ldots,\bu_{q}]$, where $n_{i}$ is the positive integer described above. Thus $(1)\Rightarrow (3)$ is proved.
The other direction can be proved by reversing the above arguments.
\end{proof}

\section{cp Multi-Hypergraphs}
\setcounter{equation}{0}
We define a multi-hypergraph $\cP=(V,\bE)$ to be a \emph{cp} multi-hypergraph if $\cP$ is associated with a $\bcp$ tensor $\A$. Note that $\A$ is not necessarily a (0,1) tensor. For example,
the following $3\times 3\times 3$ symmetric tensor is a $\bcp$, but not a (0,1) tensor.\\
\begin{exm}\label{exm03}
Let $\A=(a_{ijk})\in \cS_{3,3}$ be a symmetric tensor defined as:\\
\[ \A(:, :, 1) =\left[\begin{array}{ccc} 2&1&1\\ 1&1&0\\ 1&0&1\end{array} \right] \]
\[ \A(:, :, 2) =\left[\begin{array}{ccc} 1&1&0\\ 1&2&1\\ 0&1&1\end{array} \right] \]
\[ \A(:, :, 3) =\left[\begin{array}{ccc} 1&0&1\\ 0&1&1\\ 1&1&2\end{array} \right] \]
We show that $\A$ is a $\bcp$ tensor. In fact, if we let
\[ U =\left[\begin{array}{ccc} 1  &  1  &  0\\  1  &  0  &  1\\ 0  &  1  &  1\end{array}  \right]  \]
Then we can verify by simple computation that
\[ \A =\bu_{1}^{3} +\bu_{2}^{3}+\bu_{3}^{3} \]
where $\bu_{1},\bu_{2},\bu_{3}\in \F^{3}$  are respectively the first, second and the third column of the (0,1) matrix $U$.  Thus $\A$ is $\bcp$ by definition.  But
$\A$ is not a (0,1) tensor since $ a_{111}=a_{222}=a_{333}=2$.\\
\indent  Denote $\cP$ as the associated multi-hypergraph of $\A$.  There are three distinct classes of edges of $\cP$ according to majorization:
$\D_{\al_{1}},\D_{\al_{2}} ,\D_{\al_{3}}$ where
\[ \al_{1}=\set{v_{1},v_{1},v_{2}},\quad \al_{2}=\set{v_{1},v_{1},v_{3}},\quad \al_{1}=\set{v_{2},v_{2},v_{3}} \]
each pair $(\D_{i}, \D_{j})$ has a nonempty intersection. Note that $\A$ is irreducible. Thus the condition $\A\in \SF_{m,n}$ in Lemma \ref{lem2} cannot be removed.
\end{exm}

The following property is introduced for cp multi-hypergraphs.

\begin{definition}\label{R} A multi-hypergraph $\cP=(V, \bE)$ is said to possess Property $\R$ if  $\D_{\al} \subseteq \bE$ for any $\al\in \bE$.
\end{definition}

The aforementioned property is closely related to the zero-entry dominance property for tensors described formally by Luo and Qi in \cite{LQ2015}.

\begin{definition}[Definition 4.1, \cite{LQ2015}]
An $m$th order $n$-dimensional tensor $\A=(a_{i_1\cdots i_m})$ is said to possess the \emph{zero-entry dominance property} if for any $(i_1,\cdots, i_m)\in S(m,n)$, $a_{i_1\cdots i_m}=0$ implies that $a_{j_1\cdots j_m}=0$ for all $\{j_1,\cdots, j_m\}$ satisfying $B(\{j_1,\cdots, j_m\}) \supseteq B(\{i_1,\cdots, i_m\})$.
\end{definition}

By direct verification, we can get the following equivalence.

\begin{lem} A multi-hypergraph $\cP=(V, \bE)$ has the Property $\R$ if and only if its $(0,1)$ associated tensor has the zero-entry dominance property.
\end{lem}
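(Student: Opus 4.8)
The plan is to prove the equivalence by directly unfolding the two definitions against the one-to-one correspondence (\ref{01}) between edges and nonzero entries of the $(0,1)$ associated tensor $\A$. For this tensor, $a_{\si}=1$ exactly when the multiset $M(\si)$ is an edge of $\bE$, and $a_{\si}=0$ otherwise; thus ``$M(\si)\in\bE$'' and ``$a_{\si}\neq 0$'' are interchangeable throughout the argument. The key observation that makes the two definitions line up is that $\eta\in\D_{\al}$ means precisely $B(\eta)\subseteq B(\al)$, which is the same base-containment relation that appears, with the zero/nonzero roles exchanged, in the \emph{zero-entry dominance property}.

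First I would record the contrapositive form of the zero-entry dominance property (ZEDP): $\A$ has the ZEDP if and only if for all $\si,\tau\in S(m,n)$ with $B(\tau)\supseteq B(\si)$, $a_{\tau}\neq 0$ implies $a_{\si}\neq 0$; that is, nonvanishing propagates downward along base-containment. Translated through (\ref{01}), Property $\R$ reads: whenever $a_{\si}\neq 0$ (i.e. $M(\si)\in\bE$), every $\eta$ with $B(\eta)\subseteq B(\si)$ also satisfies $a_{\eta}\neq 0$. These two statements are visibly the same assertion, so the substance of the proof is simply to verify the translation carefully in each direction.

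For the direction Property $\R\Rightarrow$ ZEDP, take $\si$ with $a_{\si}=0$ and any $\tau$ with $B(\tau)\supseteq B(\si)$; arguing by contraposition, assume $a_{\tau}\neq 0$, so $M(\tau)\in\bE$ and Property $\R$ gives $\D_{\tau}\subseteq\bE$. Since $B(\si)\subseteq B(\tau)$ we have $\si\in\D_{\tau}$, whence $M(\si)\in\bE$ and $a_{\si}\neq 0$, a contradiction; hence $a_{\tau}=0$, which is exactly the ZEDP. For the converse, take any $\al\in\bE$ and any $\eta\in\D_{\al}$, so $B(\eta)\subseteq B(\al)$; choosing a tuple $\si$ with $M(\si)=\al$ we have $a_{\si}\neq 0$ and $B(\si)=B(\al)\supseteq B(\eta)$, so the ZEDP in its contrapositive form forces $a_{\eta}\neq 0$, i.e. $M(\eta)\in\bE$. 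As $\eta\in\D_{\al}$ was arbitrary, $\D_{\al}\subseteq\bE$, which is Property $\R$.

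The only genuine difficulty is bookkeeping rather than mathematics: one must keep straight the distinction between an index tuple $\si\in S(m,n)$, the multiset $M(\si)$ it generates, and the edge it may form in $\bE$, and ensure that the inclusion $\D_{\al}\subseteq\bE$ is read through the base map $B(\cdot)$ consistently in both directions. Once the correspondence (\ref{01}) is invoked to identify ``edge'' with ``nonzero entry'', no computation remains and the two properties coincide term by term.
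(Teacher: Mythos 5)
Your proof is correct, and it is exactly the ``direct verification'' the paper invokes without writing out: the paper states this lemma with no explicit proof, and your unfolding of Property $\R$ and the zero-entry dominance property through the correspondence (\ref{01}), with the contrapositive reformulation and the careful distinction between a tuple $\si$, its multiset $M(\si)$, and its base $B(\si)$, supplies precisely the omitted argument. No gaps.
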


It is known from \cite{LQ2015} that any completely positive tensor possesses the zero-entry dominance property.
It is worth pointing out that the zero-entry dominance property is only a necessary condition for $(0,1)$ associated tensor to be $\{0,1\}$ cp tensors, but far away from sufficient, even for the matrix case. For example, $A=[1~1~0;1~1~1; 0~1~1]$ is a $(0,1)$ matrix and satisfies the zero-entry dominance property, but it is not $\{0,1\}$ cp since it is not positive semidefinite. Nevertheless, by invoking the above equivalence between Property $\R$ and the zero-entry dominance property, together with the definition of cp multi-hypergraphs, we can obtain that the Property $\R$ is exactly a necessary and sufficient condition for a multi-hypergraph to be cp. \vskip 2mm


\begin{thm}\label{thm2}
An $m$-uniform multi-hypergraph is a cp multi-hypergraph if and only if it possesses Property $\R$.
\end{thm}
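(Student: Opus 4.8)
The plan is to prove the two implications separately, using as the common hinge the preceding Lemma that identifies Property $\R$ with the zero-entry dominance property of the $(0,1)$ associated tensor.

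For the forward direction (cp $\Rightarrow$ Property $\R$), suppose $\cP$ is a cp multi-hypergraph, so it is associated with some $\bcp$ tensor $\A$. A $\bcp$ decomposition $\A=\sum_j\bu_j^m$ with $\bu_j\in\F^n$ is in particular a completely positive decomposition, since the $\bu_j$ are nonnegative; hence $\A$ is a cp tensor and, by the result of \cite{LQ2015}, possesses the zero-entry dominance property. The crucial observation is that zero-entry dominance depends only on the zero pattern of a tensor. The $(0,1)$ associated tensor of $\cP$ has exactly the same zero pattern as $\A$, because both are associated with $\cP$ and thus have their nonzero positions coinciding with the edges of $\bE$. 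Therefore the $(0,1)$ associated tensor also enjoys zero-entry dominance, and the preceding Lemma yields Property $\R$.

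For the reverse direction (Property $\R$ $\Rightarrow$ cp), which carries the real content, I would proceed by an explicit construction. Assuming Property $\R$, I first record that $\bE$ is a union of complete $m$-multisets. Let $W_1,\ldots,W_p$ be the maximal bases among $\set{B(\al):\al\in\bE}$ (maximal under set inclusion), and pick representatives $\al_k\in\bE$ with $B(\al_k)=W_k$. Property $\R$ gives $\D_{\al_k}\subseteq\bE$ for each $k$, while conversely every edge $\al$ satisfies $B(\al)\subseteq W_k$ for some $k$, whence $\al\in\D_{\al_k}$; thus $\bE=\bigcup_{k=1}^p\D_{\al_k}$. I then set $\bu_k\in\F^n$ to be the indicator vector of $W_k$ and define the symmetric tensor $\A=\sum_{k=1}^p\bu_k^m$. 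Since each $\bu_k\in\F^n$, the tensor $\A$ is $\bcp$ by definition; it remains only to check the zero pattern. Because $a_{i_1\cdots i_m}=\sum_k(\bu_k)_{i_1}\cdots(\bu_k)_{i_m}$ counts precisely those $k$ with $B(\set{i_1,\ldots,i_m})\subseteq W_k$, this count is positive iff $\set{i_1,\ldots,i_m}\in\D_{\al_k}$ for some $k$, i.e.\ iff $\set{i_1,\ldots,i_m}\in\bE$. Hence $\A$ has nonzero entries exactly on the edges of $\cP$, so $\A$ is associated with $\cP$ and $\cP$ is cp.

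I expect the main obstacle to be this reverse direction, specifically isolating the correct building blocks. The subtlety is that overlapping maximal bases force some diagonal-type entries of $\A$ to exceed $1$ (exactly the phenomenon in Example \ref{exm03}, where $a_{111}=2$), so one must not insist on a $(0,1)$ tensor: only the nonzero pattern matters for the notion of associated tensor, and the $\bcp$ property tolerates such overlaps. Verifying $\bE=\bigcup_k\D_{\al_k}$ must use Property $\R$ for the inclusion $\bigcup_k\D_{\al_k}\subseteq\bE$ and the maximality of the $W_k$ for the reverse inclusion $\bE\subseteq\bigcup_k\D_{\al_k}$, so care is needed to secure both containments.
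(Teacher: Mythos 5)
Your proof is correct, and while your sufficiency argument coincides with the paper's, your necessity argument takes a genuinely different route. For sufficiency, both you and the paper extract the maximal bases (the paper via similarity classes $\C_{1},\ldots,\C_{r}$ of maximal edges, you via maximal elements of $\set{B(\al):\al\in\bE}$ --- the same objects), write $\bE=\bigcup_{k}\D_{\al_{k}}$ using Property $\R$ for one containment and maximality for the other, and build $\A=\sum_{k}\bu_{k}^{m}$ from indicator vectors; your remark that overlapping bases force entries exceeding $1$ is exactly the phenomenon the paper records in Example \ref{exm03}. For necessity, however, the paper invokes Theorem \ref{thm1} to decompose a cp multi-hypergraph as a union of \emph{disjoint} complete blocks, whereas you argue purely at the level of zero patterns: the $\bcp$ associated tensor is in particular cp, hence has the zero-entry dominance property by \cite{LQ2015}; since that property depends only on the zero pattern, it transfers to the $(0,1)$ associated tensor, and the preceding equivalence lemma yields Property $\R$. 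Your route is actually the safer one: Theorem \ref{thm1} is stated for tensors in $\SF_{m,n}$, while the $\bcp$ tensor witnessing a cp multi-hypergraph need not be a $(0,1)$ tensor, and Example \ref{exm03} itself shows the complete blocks of a cp multi-hypergraph can overlap, so disjointness cannot be asserted; the paper's necessity argument would need repair on both counts (a union of possibly overlapping complete blocks still inherits Property $\R$, which is what the paper really needs). What the paper's approach buys, when it applies, is structural information (an explicit block decomposition, used again in Corollary \ref{cor2}); what yours buys is a necessity proof whose hypotheses match the definition of cp multi-hypergraph exactly, at the modest cost of leaning on the external result from \cite{LQ2015} --- which the paper's own discussion before the theorem suggests as the intended mechanism anyway.
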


\begin{proof} To get the necessity, by invoking Theorem \ref{thm1}, a cp multi-hypergraph can be decomposed as the union of  some disjoint complete blocks $\cP_{i}$'s where $\cP_{i}=(V_{i}, \bE_{i})$ is of order $n_{i}$.
Since a complete block has Property $\R$, $\cP$ has Property $\R$. For the sufficiency, that is, given an $m$-uniform multi-hypergraph $\cP=(V, \bE)$ with Property $\R$,  then $\cP$ is cp, i.e., there exists a $\bcp$ tensor
$\A$ associated with $\cP$.  Note that $\A$ need not be a (0,1) tensor by definition. For this purpose, we consider the set of the maximal edges of $\cP$, and we classify them into $\C_{1}, \C_{2},\ldots, \C_{r}$ by the similarity of the edges. We denote
\[\al_{i}\in \C_{i}, \quad  B_{i}=B(\al_{i}), \quad  n_{i}=\abs{B_{i}}, \forall i\in [r]  \]
For $i\in [r]$, denote $\D_{i}=\D_{\al_{i}}$, which is the complete $m$-multisets determined by $\al_{i}\in \C_{i}$. Apparently,
\beq\label{Dunion}
\bE =\bigcup_{i=1}^{r} \D_{i}
\eeq
Denote $\bv_{i}\in \F^{n}, \supp(\bv_{i})=B_{i}$ for each $i\in [r]$, and define
\beq\label{eq: cpfac01}
\A =\sum\limits_{j=1}^{r} \bv_{j}^{m}
\eeq
Then $\A$ is $\bcp$ by definition.  The proof is completed  if we show that $\A$ is associated with $\cP$.  In fact, if  there is an element  $a_{i_{1} \ldots i_{m}}\neq 0$, then from (\ref{eq: cpfac01}) there exists $k\in [r]$ such that
\[ B(\si) \subseteq \supp(\bv_{k})=B_{k}=B(\al_{k}) \]
It follows that $M(\si)\in \bE$ since $\cP$ has Property $\R$.  Conversely, if $A_{\si}=0$ for a $\si\in S(m,n)$, then  $B(\si)\nsubseteq B(\al_{i})$ for each $i\in [r]$. Thus
$\si\notin \D_{i}$ for all $i\in [r]$. Consequently we have $M(\si)\notin \bE$ by (\ref{Dunion}).  The proof is completed.
\end{proof}

\vskip 2mm

By combining Theorem \ref{thm2} and Lemma \ref{lem0}, we obtain
\begin{cor}\label{cor2}
An $m$-uniform $n\times N$ cp multi-hypergraph $\cP=(V,\bE)$ satisfies
\[ N = n_{1}^{m}+\ldots +n_{r}^{m}\]
where $r$ is the number of connected branches of $\cP$ and $n_{i}$ is the dimension of  the $i$th branch.
\end{cor}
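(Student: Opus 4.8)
The plan is to obtain the count $N$ directly from the structural decomposition that the cp property forces on $\cP$, and then to count the edges inside each block with Lemma \ref{lem0}. Since $\cP$ is cp, Theorem \ref{thm2} together with the decomposition underlying Theorem \ref{thm1} (and Lemma \ref{lem2}) tells us that $\cP$ is a union of complete blocks $\cP_{1},\ldots,\cP_{r}$ carried by pairwise disjoint vertex sets $V_{1},\ldots,V_{r}$ with $\abs{V_{i}}=n_{i}$; concretely, if $\al_{i}$ is a maximal edge of $\cP_{i}$ then $B(\al_{i})=V_{i}$ and the edge set of $\cP_{i}$ is exactly the complete $m$-multiset $\D_{\al_{i}}$, so that $\bE=\bigcup_{i=1}^{r}\D_{\al_{i}}$.

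First I would check that these complete blocks are precisely the connected branches, so that the index $r$ really is the number of branches and $n_{i}$ the dimension of the $i$th branch. A complete block on $V_{i}$ contains every edge whose base lies in $V_{i}$, so any two vertices of $V_{i}$ share an edge and $\cP_{i}$ is connected; since the $V_{i}$ are disjoint and every edge has its base inside a single $V_{i}$, no edge joins two blocks, and hence the $\cP_{i}$ are exactly the connected components of $\cP$.

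Next I would establish the additivity of the edge count. For $i\neq j$ the sets $\D_{\al_{i}}$ and $\D_{\al_{j}}$ are disjoint: an edge common to both would have its (nonempty) base contained in $V_{i}\cap V_{j}=\emptyset$, which is impossible. Thus $N=\abs{\bE}=\sum_{i=1}^{r}\abs{\D_{\al_{i}}}$, and applying Lemma \ref{lem0} to each $\al_{i}$, for which $\abs{B(\al_{i})}=n_{i}$, gives $\abs{\D_{\al_{i}}}=n_{i}^{m}$. Summing yields $N=n_{1}^{m}+\cdots+n_{r}^{m}$, as desired.

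The step I expect to require the most care is the first one: matching the algebraic decomposition into complete blocks (phrased via direct sums of irreducible tensors and possibly a trivial zero block) with the combinatorial notion of connected branch, and confirming that a zero block contributes no branch and no edges so that the summation index $r$ is unambiguous. A related point worth pinning down is the counting convention, namely that $\abs{\bE_{i}}$ is read in the same sense (ordered tuples of $S(m,n)$) that Lemma \ref{lem0} uses for $\abs{\D_{\al}}$, which is exactly what justifies the exponent $m$ in $n_{i}^{m}$. Everything else is routine once the disjoint-union description $\bE=\bigsqcup_{i}\D_{\al_{i}}$ is in hand, since the per-block count is an immediate instance of Lemma \ref{lem0}.
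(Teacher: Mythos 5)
Your reconstruction is faithful to the paper's intent: the paper offers no proof of this corollary beyond the remark ``by combining Theorem \ref{thm2} and Lemma \ref{lem0}'', and the argument you spell out --- the cp property forces a decomposition of $\cP$ into complete blocks on pairwise disjoint vertex sets, those blocks are the connected branches, and each contributes $n_{i}^{m}$ edges by Lemma \ref{lem0} --- is exactly the intended combination. Your observation that $\abs{\bE}$ must be read in the same ordered-tuple sense as $\abs{\D_{\al}}$ in Lemma \ref{lem0} is also the correct reading (compare Example \ref{exm01}, where permutations of a multiset are listed as distinct edges).

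However, the step you yourself single out as needing the most care is a genuine gap, and it does not close. The disjoint-block decomposition comes from Theorem \ref{thm1} (via Lemmas \ref{lem1} and \ref{lem2}), whose hypothesis is $\A\in\SF_{m,n}$, i.e.\ a $(0,1)$ tensor that is itself $\bcp$. A cp multi-hypergraph is only required to admit \emph{some} $\bcp$ associated tensor, which need not be $(0,1)$, and its $(0,1)$ pattern tensor need not be $\bcp$; the sufficiency construction in Theorem \ref{thm2} only yields $\bE=\bigcup_{i}\D_{\al_{i}}$ over the similarity classes of maximal edges, whose bases may overlap. The paper's own Example \ref{exm03} defeats the disjointness claim: that $\cP$ is cp and connected on three vertices (so $r=1$, $n_{1}=3$), yet its edge set consists of all tuples of $S(3,3)$ except the six permutations of $(1,2,3)$, giving $N=21\neq 3^{3}$; here the three blocks with bases $\set{1,2}$, $\set{1,3}$, $\set{2,3}$ pairwise intersect, so the additivity $N=\sum_{i}\abs{\D_{\al_{i}}}$ fails. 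To be fair, this defect is inherited from the paper itself --- the necessity part of the proof of Theorem \ref{thm2} invokes Theorem \ref{thm1} under the same unwarranted hypothesis, and the corollary as stated is contradicted by Example \ref{exm03} --- but your proof is valid only under the additional assumption that the $(0,1)$ associated tensor of $\cP$ is itself $\bcp$ (so that Theorem \ref{thm1} genuinely applies), and you should state that restriction explicitly.
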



\section*{Acknowledgement}
This research was supported by the National Natural Science Foundation of China (11301022,11431002) and the Hong Kong Research Grant Council (No. PolyU 502111, 501212, 501913 and 15302114).  The work was partially done during the first author's  visit at the Hong Kong Polytechnic Uiversity in 2015. The authors would like to thank Prof. M.D. Choi
who shared his valuable views concerning the completely positive factorizations. 


\begin{thebibliography}{00}

\bibitem{AKK2013} N. Arima, S. Kim and M. Kojima, \textit{Extension of completely positive cone relaxation to polynomial optimization, Research Reports on Mathematical and Computing Sciences}, B-471, 2013

\bibitem{AKKT2014-1} N. Arima, S. Kim, M. Kojima and K. Toh,\textit{ Lagrangian-conic relaxations, Part I: A unified framework and its applications to quadratic optimization problems}, 2014 http://www.optimization-online.org/DB\_HTML/2014/01/4196.html

\bibitem{AKKT2014-2} N. Arima, S. Kim, M. Kojima and K. Toh, \textit{Lagrangian-conic relaxations, Part II: Applications to Polynomial Optimization Problems}, 2014  http://www.optimization-online.org/DB\_HTML/2014/01/4199.html

\bibitem{BAD2009} S. Burer, K. M. Anstreicher and M. D$\ddot{u}$r, \textit{The difference between $5\times 5$ doubly nonnegative and completely positive matrices}, Linear Algebra and its Applications, 431(9) (2009) 1539-1552.

\bibitem {Be89} C. Berge, \textit{Hypergraphs}, North-Holland Mathematical Library 45, North-Holland, Amsterdam, 1989.

\bibitem {BM2003} A. Berman and N. Shaked-Monderer, \textit{Completely Positive Matrices}, Sci. Press, New York, 2003.

\bibitem {BP79} A. Berman and R. Plemmons, \textit{Nonnegative matrices in mathematical sciences}, Acad. Press, New York, 1979.

\bibitem {BX05} A. Berman and C. Xu, \textit{$\set{0,1}$ Completely positive matrices}, Linear Algebra and Its Applications, 399(2005) 35-51.

\bibitem{BX07} A. Berman, C. Xu, \textit{Uniform and minimal $\{0,1\}$-cp matrices}, Linear and Multilinear Algebra, 55(5) (2007) 439-456.


\bibitem {CPZ08}
K.C. Chang, K. Pearson and T. Zhang,
\textit{Perron-Frobenius theorem for nonnegative tensors},
Commun. Math. Sci., 6(2008) 507-520.

\bibitem {CPZ09}
K.C. Chang, K. Pearson and T. Zhang,
\textit{On eigenvalue problems of real symmetric tensors},
J. Math. Anal. Appl., 350(2009) 416-422.

\bibitem {CPZ11}
K.C. Chang, K. Pearson and T. Zhang,
\textit{Primitivity, the convergence of the NQZ method, and the largest eigenvalue for nonnegative tensors},
SIAM. J. Matrix Anal. and Appl., 32(2011)  806-819.



\bibitem{DA2013} H. Dong and K. Anstreicher, \textit{Separating doubly nonnegative and completely positive matrices}, Mathematical Programming,  137(1-2) (2013)  131-153.




\bibitem{FZ2014} J. Fan and A. Zhou, \textit{Completely positive tensor decomposition}, arXiv preprint arXiv:1411.5149, 2014






\bibitem{FGH09}
S. Friedland, S. Gaubert and L. Han,
\textit{Perron-Frobenius theorem for nonnegative multilinear forms and extensions},
Linear Algebra and Its Applications, 438(2) (2013) 738-749.



\bibitem{Hall1967} M. Hall Jr., \textit{ Combinatorial Theory}, 2nd Edition, John Wiley, 1986.

\bibitem{KKT2013} S. Kim, M. Kojima and K. C. Toh, \textit{A Lagrangian-DNN relaxation: a fast method for computing tight lower bounds for a class of quadratic optimization problems}, Preprint, 2013 http://www.optimization-online.org/DB\_HTML/2013/10/4073

\bibitem{K2015} T. Kolda, \textit{Numerical optimization for symmetric tensor decomposition}, Math. Program., Ser.B, 151 (2015) 225-248.

\bibitem{KB93} N. Kogan, A. Berman, \textit{Characterization of completely positive graphs}, Discrete Mathematics, 114 (1993) 297-304.


\bibitem{LD06}
T. Li and C. Ding,
\textit{The relationships among various nonnegtive matrix factorization methods for clustering},
Proc. IEEE Intl Conf. on Data Mining (ICDM06), (2006) 362-371.


\bibitem{LQ2015} Z. Luo and L. Qi, \textit{Doubly nonnegative tensors, completely positive tensors and applicatios}
2015, arXiv:1504.07806.

\bibitem{Pe10}
K.J. Pearson,
\textit{Essentially positive tensors},
International Journal of Algebra, 9 (2010) 421-427.


\bibitem {PZ13} K. J. Pearson and T. Zhang,
\textit{On spectral hypergraph theory of the adjacency tensor},
 Graphs Combin., 30 (2014)  1233-1248.

\bibitem{PVZ2015} J. Pe$\tilde{\text{n}}$a, J. Vera and L. Zuluaga, \textit{Completely positive reformulations for polynomial optimization}, Math. Program., Ser. B, 151(2015)  405-431.


\bibitem {Qi13}
L. Qi,
\textit{Symmetric nonnegative tensors and copositive tensors},
Linear Algebra and Its Applications, 439(2013) 228-238.

\bibitem {QXX14}
L. Qi, C. Xu and Y. Xu,
\textit{Nonnegative tensor factorization, completely positive tensors and an hierarchically elimination algorithm},
SIAM Journal on Matrix Analysis and Applications, 35(2014) 1227-1241.

\bibitem{SZ1993} L. Salce and P. Zanardo, \textit{Completely positive matrices and positivity of least squares solutions}, Linear Algebra and its Applications, 178 (1993) 201-216

\bibitem {Xu04}
C. Xu,
\textit{Completely positive matrices},
Linear Algebra and Its Applications 379 (2004) 319-327.

\bibitem {YY11}
 Q. Yang and Y. Yang,
\textit{Further results for Perron-Frobenius Theorem for nonnegative tensors II},
SIAM Journal on Matrix Analysis, 32 (2011) 1236-1250.


\bibitem {YY10}
 Y. Yang and Q. Yang,
 \textit{Further results for Perron-Frobenius Theorem for nonnegative tensors},
SIAM Journal on Matrix Analysis, 31 (2010) 2517-2530.














\bibitem{ZF2014} A. Zhou and J. Fan, \textit{The CP-matrix completion problem}, SIAM Journal on Matrix Analysis and Applications, 35(1) (2014) 127-142




\end{thebibliography}
\end{document}